\theoremstyle{plain}
\newtheorem{theorem}{Theorem}[section]
\newtheorem{prop}[theorem]{Proposition}
\theoremstyle{definition}
\newtheorem{defin}[theorem]{Definition}
\newtheorem{constr}[theorem]{Construction}
\theoremstyle{remark}
\renewcommand{\phi}{\varphi}
\newcommand{\pg}{P_{\Gamma,\varepsilon}}
\newcommand {\ib}[1]{\textit{\textbf{#1}}}
\begin{document}
\title{\bf\Large{Graph-truncations of $3$-polytopes.}}
\author{Nickolai Erokhovets \thanks{The work was supported by RFBR grant No
14-01-31398-a}}
\date{}
\maketitle

\begin{abstract} In this paper we study the operation of cutting off edges of a simple
$3$-polytope $P$ along the graph $\Gamma$. We give the criterion when the
resulting polytope is simple and when it is flag. As a corollary we prove the
analog of Eberhard's theorem about the realization of polygon vectors of
simple $3$-polytopes for flag polytopes.
\end{abstract}

\section{Introduction.}
For the introduction to the polytope theory we recommend the books
\cite{Gb03,Z07}.

\begin{defin}
A \emph{convex polytope} $P$ is a set
$$
P=\{\ib{x}\in \mathbb R^n\colon \ib{a}_i\ib{x}+b_i\geqslant 0, i=1,\dots,m\}
$$
Let this representation be \emph{irredundant}, that is deletion of any
inequality changes the set. Then each hyperplane
$\mathcal{H}_i=\{\ib{x}\in\mathbb R^n\colon \ib{a}_i\ib{x}+b_i=0\}$ defines a
\emph{facet} $F_i=P\cap \mathcal{H}_i$.

In the following by a \emph{polytope} we mean a convex polytope.

A \emph{dimension} $\dim(P)$ of the polytope $P$ is defined as $\dim {\rm
aff}(P)$. We will consider $n$-dimensional polytopes ($n$-polytopes) in
$\mathbb R^n$.

A \emph{face} $F$ of a polytope is an intersection $F=P\cap
\{\ib{a}\ib{x}+b=0\}$ for some \emph{supporting hyperplane}
$\{\ib{a}\ib{x}+b=0\}$, i.e. $\ib{a}\ib{x}+b\geqslant 0$ for all $\ib{x}\in
P$. Each face is a convex polytope itself. $0$-dimensional faces are called
\emph{vertices}, $1$-dimensional faces -- \emph{edges}, $(n-1)$-faces --
\emph{facets}. It can be shown that the set of all facets is $\{F_1,\dots,
F_m\}$. Intersection of any set of faces of polytope is a face again (perhaps
empty).

A vertex of an $n$-polytope $P$ is called \emph{simple} if it is contained in
exactly $n$ facets. An $n$-polytope $P$ is called \emph{simple}, if all it's
vertices are simple. Each $k$-face of a simple polytope is an intersection of
exactly $n-k$ facets.

A \emph{combinatorial polytope} is an equivalence class of combinatorially
equivalent convex polytopes, where two polytopes are \emph{combinatorially
equivalent} if there is an inclusion-preserving bijection of the sets of
their faces.

A simple polytope is called \emph{flag} if any set of pairwise intersecting
facets $F_{i_1},\dots,F_{i_k}$: $F_{i_s}\cap F_{i_t}\ne\varnothing$ has
nonempty intersection $F_{i_1}\cap\dots\cap F_{i_k}\ne\varnothing$.

A \emph{non-face} is the set $\{F_{i_1},\dots, F_{i_k}\}$ with
$F_{i_1}\cap\dots\cap F_{i_k}=\varnothing$. A \emph{missing face} is an
inclusion-minimal non-face.
\end{defin}
The following results are well-known.
\begin{prop} A polytope $P$ is flag if and only if all its missing faces have
cardinality $2$.
\end{prop}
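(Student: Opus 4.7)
The proof is a short two-way implication using the minimality built into the definition of a missing face, so I would organize it as two contrapositive-style arguments.

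For the forward direction, I would assume $P$ is flag and argue by contradiction: suppose there exists a missing face $\{F_{i_1},\dots,F_{i_k}\}$ with $k\geqslant 3$. By inclusion-minimality, every proper subset has nonempty intersection; in particular, since $k\geqslant 3$, each pair $\{F_{i_s},F_{i_t}\}$ is a proper subset and therefore $F_{i_s}\cap F_{i_t}\ne\varnothing$. Thus the facets $F_{i_1},\dots,F_{i_k}$ are pairwise intersecting, and the flag property forces $F_{i_1}\cap\dots\cap F_{i_k}\ne\varnothing$, contradicting the fact that the set is a non-face.

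For the converse, I would assume that every missing face has cardinality $2$ and show $P$ is flag. Take any pairwise intersecting collection $F_{i_1},\dots,F_{i_k}$ and suppose, for contradiction, that $F_{i_1}\cap\dots\cap F_{i_k}=\varnothing$. Then this collection is a non-face, so it contains some inclusion-minimal non-face, i.e.\ a missing face. By hypothesis that missing face has exactly two elements, giving two facets $F_{i_s},F_{i_t}$ in the collection with $F_{i_s}\cap F_{i_t}=\varnothing$, which contradicts pairwise intersection.

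Neither direction presents a genuine obstacle; the only subtle point is to notice that in the forward implication one must use $k\geqslant 3$ to ensure that pairs actually form \emph{proper} subsets of the missing face (so that minimality applies), and in the backward implication one must invoke the existence of an inclusion-minimal non-face inside any non-face, which is automatic since we are working with a finite set of facets.
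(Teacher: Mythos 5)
Your argument is correct: both directions use exactly the right ingredients (minimality of a missing face to get pairwise intersections, and extraction of a minimal non-face inside any non-face, which exists by finiteness and whose proper subsets are automatically intersecting). Note that the paper states this proposition as well-known and gives no proof of its own, so there is nothing to compare against; your write-up is the standard argument, and the only point you leave implicit --- that a missing face cannot have cardinality $0$ or $1$, since facets are nonempty --- is immediate.
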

\begin{prop}
Each face of a flag polytope is a flag polytope again.
\end{prop}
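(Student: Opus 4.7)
The plan is to argue directly from the definition of flagness, by lifting a pairwise-intersecting family of facets of the face $F$ to a pairwise-intersecting family of facets of the ambient polytope $P$.

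First I would observe the preliminary fact that any face $F$ of a simple polytope $P$ is itself a simple polytope, and that its facets are exactly the intersections $F\cap G$ of codimension one in $F$, where $G$ ranges over facets of $P$ not containing $F$. Using simplicity of $P$, I would write $F=F_{j_1}\cap\dots\cap F_{j_r}$ as the intersection of the facets of $P$ containing it.

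Now suppose $F\cap G_1,\dots,F\cap G_k$ are pairwise intersecting facets of $F$, with $G_1,\dots,G_k$ facets of $P$. The key step is to verify that the enlarged family
\[
\{F_{j_1},\dots,F_{j_r},G_1,\dots,G_k\}
\]
is pairwise intersecting in $P$: the $F_{j_s}$'s all contain $F\ne\varnothing$; each $F_{j_s}\cap G_i$ contains $F\cap G_i\ne\varnothing$; and each $G_i\cap G_{i'}$ contains $(F\cap G_i)\cap(F\cap G_{i'})\ne\varnothing$ by hypothesis. Applying flagness of $P$ to this family gives
\[
F_{j_1}\cap\dots\cap F_{j_r}\cap G_1\cap\dots\cap G_k=F\cap G_1\cap\dots\cap G_k=(F\cap G_1)\cap\dots\cap(F\cap G_k)\ne\varnothing,
\]
which is exactly the flagness condition for $F$.

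I do not expect any serious obstacle: the whole proof is a bookkeeping argument, and the only substantive point is remembering to throw in the facets $F_{j_s}$ defining $F$ before invoking flagness of $P$, since the $G_i$'s alone need not capture the face $F$ itself. One could alternatively argue via Proposition 1.2 by checking that missing faces of $F$ pull back to missing faces of $P$, but the direct route above seems shorter and uses only the definition.
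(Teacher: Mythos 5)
Your proof is correct. The paper itself states this proposition without proof, as one of several ``well-known'' facts, so there is nothing to compare against; your argument is the standard one. The key point --- enlarging the family $\{G_1,\dots,G_k\}$ by the facets $F_{j_1},\dots,F_{j_r}$ of $P$ whose intersection is $F$ before applying flagness of $P$ --- is exactly what makes the bookkeeping work, since it converts the intersection supplied by flagness into $F\cap G_1\cap\dots\cap G_k=(F\cap G_1)\cap\dots\cap(F\cap G_k)$. The supporting facts you invoke (a face of a simple polytope is simple, every facet of $F$ has the form $F\cap G$ for a facet $G$ of $P$ not containing $F$, and $F$ is the intersection of the facets of $P$ containing it) are all standard, so the argument is complete.
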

\begin{prop}\label{3-belt}
The simplex $\Delta^n$ is not flag for $n\geqslant 3$. A $3$-polytope
$P^3\ne\Delta^3$ is not flag if and only if it has missing face of
cardinality $3$: $\{F_i,F_j,F_k\}$, $F_i\cap F_j, F_j\cap F_k, F_k\cap F_i
\ne\varnothing$, $F_i\cap F_j\cap F_k=\varnothing$.
\end{prop}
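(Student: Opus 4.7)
The plan splits into three pieces. First, for $\Delta^n$ with $n\geqslant 3$, the $n+1$ facets have empty total intersection while any $n$ of them meet in a vertex (since $\Delta^n$ is simple), so $\{F_1,\dots,F_{n+1}\}$ is a missing face of cardinality $n+1\geqslant 4$, and non-flagness follows from the flag criterion stated just above. Second, for $P^3\ne\Delta^3$, the ``$\Leftarrow$'' direction of the equivalence is immediate from the same criterion (a missing face of cardinality $3\ne 2$ obstructs flagness), so the content lies entirely in the ``$\Rightarrow$'' direction.

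For that direction, I would assume $P^3$ is not flag and extract a missing face $M$ with $k=|M|\geqslant 3$ via the flag criterion. The key structural remark is that in a simple $3$-polytope the faces are exactly the intersections of at most $3$ facets (vertices, edges, facets, and $P^3$ itself), so every proper subset of $M$ must have size $\leqslant 3$, forcing $k\in\{3,4\}$. If $k=3$ the statement holds, so the whole difficulty reduces to showing that $k=4$ forces $P^3=\Delta^3$, contradicting the hypothesis on $P^3$.

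To dispatch $k=4$, write $M=\{F_1,F_2,F_3,F_4\}$ and set $e_{ij}=F_i\cap F_j$ (an edge) and $v_{ijk}=F_i\cap F_j\cap F_k$ (a vertex, by simplicity). Because $v_{ijk}$ and $v_{ijl}$ both lie on the edge $e_{ij}$ and an edge has exactly two endpoints, these are precisely the endpoints of $e_{ij}$. Hence the three edges $e_{12},e_{13},e_{14}$ close up into a triangle $v_{123}v_{124}v_{134}$ sitting inside the boundary polygon of $F_1$. A simple polygonal cycle cannot properly contain a $3$-cycle, so $F_1$ is itself a triangle; by symmetry all four $F_i$ are triangles. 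Their union is a closed surface combinatorially equal to $\partial\Delta^3$, embedded in $\partial P^3\cong S^2$; since a closed $2$-manifold sitting inside another of the same dimension must coincide with it, this forces $P^3=\Delta^3$. I expect the polygon-cycle step (identifying each $F_i$ as a triangle) to be the main obstacle, while the upper bound $k\leqslant 4$ and the closed-surface conclusion are essentially definitional bookkeeping.
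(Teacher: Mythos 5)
The paper itself offers no proof of this proposition: it is listed among the ``well-known'' facts (with simplicity of $P^3$ implicit, since flagness is only defined for simple polytopes), so there is no in-paper argument to compare against; I can only judge your proof on its own, and it is correct. The simplex case and the ``if'' direction are indeed immediate from the criterion that flagness is equivalent to all missing faces having cardinality $2$, and the bound $k\leqslant 4$ is right because in a simple $3$-polytope no point lies on more than three facets, so any nonempty intersection of distinct facets involves at most three of them. In the case $k=4$ two small points deserve an explicit line: first, $v_{123}\ne v_{124}$, since their coincidence would put a point in all four facets, contradicting $F_1\cap F_2\cap F_3\cap F_4=\varnothing$; this distinctness is also what guarantees that $F_i\cap F_j$ really is an edge rather than a single vertex, being a face of $P$ containing two distinct vertices. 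Second, your closing step (the union of the four triangles is a closed surface, hence all of $\partial P^3\cong S^2$) does work, but it silently uses that every edge of a $3$-polytope lies in exactly two facets and that each $v_{ijk}$ is simple with all three of its facets among the $F_i$, so that the union is open as well as closed in $\partial P^3$; if you prefer a purely combinatorial finish, note instead that each $F_i$ is a triangle whose three neighbouring facets are exactly the other three $F_j$, so connectedness of the facet adjacency graph forces $P^3$ to have only these four facets and hence to be $\Delta^3$. With these remarks supplied, the argument is complete.
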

\begin{defin}
Missing face $\{F_i,F_j,F_k\}$ of a $3$-polytope $P^3$ we will also call a
\emph{$3$-belt}.
\end{defin}
Let $f_i(P)$ be the number of $i$-faces of the polytope $P$.
\begin{prop}[The Euler formula]
For a $3$-polytope we have
$$
f_0-f_1+f_2=2
$$
\end{prop}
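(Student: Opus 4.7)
The plan is to reduce the formula to the classical Euler identity $V-E+F=2$ for a connected plane graph. The bridge is the Schlegel diagram of $P^3$.

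First I would fix a facet $F_0$ of $P^3$ and choose a viewpoint $p$ lying just outside $P^3$ beyond $F_0$ --- concretely, in the open halfspace opposite to the one containing $P^3$ with respect to the hyperplane $\mathcal{H}_0$ of $F_0$, placed so close to $\relint F_0$ that every point of $\partial P^3\setminus\relint F_0$ is seen from $p$ through the interior of $F_0$. Central projection from $p$ onto $\mathcal{H}_0$ then maps $\partial P^3\setminus\relint F_0$ homeomorphically onto $F_0$, and sends the $1$-skeleton of $P^3$ to a connected straight-line plane graph $G$ drawn inside $F_0$.

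Now I would count cells. By construction, $G$ has $V(G)=f_0(P^3)$ vertices and $E(G)=f_1(P^3)$ edges. Its bounded regions are the images of the facets of $P^3$ other than $F_0$, and there is exactly one unbounded region, corresponding to the complement of $F_0$ in $\mathcal{H}_0$. Therefore the number of regions of the drawing is $(f_2(P^3)-1)+1=f_2(P^3)$. Since $G$ is connected, Euler's formula $V(G)-E(G)+F(G)=2$ for plane graphs yields $f_0-f_1+f_2=2$, as required.

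The main obstacle is the first step --- verifying that a viewpoint $p$ with the claimed visibility property exists, and that the induced map on $\partial P^3\setminus\relint F_0$ is a homeomorphism onto $F_0$ whose restriction to the $1$-skeleton has no extra crossings. This is a convexity argument: any facet $F_i\ne F_0$ lies strictly above $\mathcal{H}_0$ on the $P^3$-side, so the cone from $p$ over $F_i$ meets $\mathcal{H}_0$ in a convex polygon contained in $F_0$ once $p$ is chosen sufficiently close to $\relint F_0$, and injectivity follows from $P^3$ being convex together with $p$ lying beyond only $F_0$. Once this geometric input is granted, the bookkeeping in the second paragraph is automatic from the definition of a face.
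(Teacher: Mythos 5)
Your argument is correct; note that the paper itself offers no proof of this proposition --- it is listed among the ``well-known'' facts and used as a black box (its only role is in deriving the relation $3p_3+2p_4+p_5=12+\sum_{k\geqslant 7}(k-6)p_k$ for simple $3$-polytopes). Your route is the standard Schlegel-diagram reduction: place a viewpoint beyond a single facet $F_0$ (close enough to a point of $\relint F_0$ that all other facet inequalities remain strict), project the boundary complex minus $\relint F_0$ homeomorphically onto $F_0$, and read off $f_0-f_1+f_2=2$ from Euler's formula for connected plane graphs, with $F_0$ playing the role of the unbounded region so that the region count is $(f_2-1)+1=f_2$. The bookkeeping is right, and the geometric input you flag (existence of a point beyond $F_0$ and the fact that the resulting radial projection is a face-preserving homeomorphism, so the projected $1$-skeleton has no extra crossings) is a genuine but standard convexity lemma, proved for instance in Ziegler's book cited in the paper. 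The one thing worth being explicit about is that your proof transfers all the content onto the plane-graph Euler formula $V-E+F=2$, which itself requires an argument (say, induction on edges via a spanning tree); as long as that is accepted as known, your proof is complete, and it is exactly the kind of justification the paper implicitly relies on.
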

Let $p_k$ be the number of $2$-faces of $P$ that are $k$-gons.
\begin{prop}
For a simple $3$-polytope $P^3$ we have
\begin{equation*}
3p_3+2p_4+p_5=12+\sum\limits_{k\geqslant 7}(k-6)p_k\qquad\qquad(*)
\end{equation*}
\end{prop}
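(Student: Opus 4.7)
The plan is to derive the identity $(*)$ purely by combining Euler's formula with the two standard double-counting relations available for a simple $3$-polytope, and then sorting the resulting linear combination of the $p_k$ by the sign of the coefficient $6-k$.

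First, I would record the two incidence identities. Since every vertex of a simple $3$-polytope lies in exactly $3$ facets, exactly $3$ edges meet at each vertex, so counting vertex-edge incidences gives $3f_0 = 2f_1$. Next, since every edge lies on exactly two $2$-faces, counting edge-face incidences through the boundaries of the $2$-faces gives $\sum_k k\,p_k = 2f_1$, while $f_2 = \sum_k p_k$ by definition.

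Second, I would substitute into the Euler formula. From $3f_0 = 2f_1$ I eliminate $f_0 = 2f_1/3$, and Proposition on the Euler formula then yields $-f_1/3 + f_2 = 2$, i.e. $2f_1 = 6f_2 - 12$. Plugging in $2f_1 = \sum_k k\,p_k$ and $f_2 = \sum_k p_k$ produces
\[
\sum_{k\geqslant 3}(6-k)\,p_k \;=\; 12.
\]
Finally, I would split the sum according to the sign of $6-k$: the terms $k=3,4,5$ contribute $3p_3+2p_4+p_5$, the term $k=6$ vanishes, and the remaining terms $k\geqslant 7$ move to the right-hand side with the opposite sign, giving exactly $(*)$.

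There is no real obstacle here; the only thing to be careful about is justifying that $p_k=0$ for $k\leqslant 2$ (so that the sum genuinely starts at $k=3$), which is immediate because a $2$-face of a convex polytope is a convex polygon with at least three vertices. Everything else is algebraic bookkeeping after the two incidence counts and Euler's formula are in hand.
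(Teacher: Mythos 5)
Your proposal is correct and follows essentially the same route as the paper: counting vertex--edge incidences to get $3f_0=2f_1$, combining with Euler's formula to obtain $2f_1=6f_2-12$, and then using the facet--edge count $\sum_k k\,p_k=2f_1$ to rearrange into $(*)$. The extra remark about $p_k=0$ for $k\leqslant 2$ is harmless bookkeeping and does not change the argument.
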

\begin{proof}
Let us count the number of pairs (edge, it's vertex). It is equal to $2f_1$,
and since $P$ is simple to $3f_0$. Then $f_0=\frac{2f_1}{3}$ and from the
Euler formula we obtain $2f_1=6f_2-12$. Then counting the pairs (facet, it's
edge) we have
$$
\sum\limits_{k\geqslant 3}kp_k=2f_1=6\left(\sum\limits_{k\geqslant 3}p_k\right)-12,
$$
which implies the formula (*).
\end{proof}
\begin{theorem}[Eberhard]{\rm  \cite{Eb1891}}\label{ET}{ }\\
For every sequence $(p_k|3\leqslant k\ne 6)$ of nonnegative integers
satisfying {\rm (*)}, there exist values of $p_6$ such that there is a simple
$3$-polytope $P^3$ with  $p_k=p_k(P^3)$ for all $k\geqslant 3$.
\end{theorem}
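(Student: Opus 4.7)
The plan is to pass to the polar dual and construct the corresponding simplicial $3$-polytope as a triangulation of $S^2$, then invoke Steinitz's theorem to realize it geometrically. Under polar duality a simple $3$-polytope corresponds to a simplicial $3$-polytope whose boundary complex is a $3$-connected triangulation of the $2$-sphere; the $k$-gonal facets of $P^3$ are in bijection with the vertices of degree $k$ in the triangulation, so $p_k(P^3)=v_k$. A direct count from Euler's formula together with $2E=3F$ gives the dual identity
\[
\sum_{k\geqslant 3}(6-k)v_k=12,
\]
which coincides with $(*)$ under $v_k=p_k$. Thus the statement reduces to the following combinatorial problem: given a nonnegative sequence $(v_k)_{k\ne 6}$ satisfying this identity, construct a $3$-connected triangulation of $S^2$ realizing the prescribed vertex-degree counts for $k\ne 6$, with $v_6$ determined by the construction.

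I would build such a triangulation by the classical patch-and-assemble method. For each prescribed non-hexagonal vertex, prepare a small triangulated disk (a ``patch'') containing one interior vertex of degree $k\ne 6$, whose remaining interior vertices have degree $6$, and whose boundary matches the hexagonal tiling of the plane. Each patch carries a combinatorial angular defect of $6-k$, and relation $(*)$ ensures that the defects sum to $12$, the total defect required by any triangulation of $S^2$. Therefore the patches can be positioned on $S^2$ and joined through a single filler region triangulated entirely by degree-$6$ vertices, the number of degree-$6$ vertices in this filler being the free parameter $v_6=p_6$.

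The main obstacle is the global assembly: one must arrange the patches on $S^2$ so that the complement is actually realizable as a connected region of the hexagonal tiling and, crucially, so that the resulting graph is $3$-connected, the hypothesis required by Steinitz's theorem. I would address this by fixing a small catalogue of standard patches -- one for each $k\ne 6$, together with a few composite gadgets needed when some $v_k$ is large -- and a canonical gluing scheme, for instance lining the patches up along an equatorial hexagonal strip and closing the two polar caps with further hexagons. The $3$-connectivity check then becomes local: each patch and the hexagonal filler are internally $3$-connected and the prescribed gluing preserves this. Finally, Steinitz's theorem realizes the triangulation as a genuine simplicial $3$-polytope, and its polar dual is the required simple $3$-polytope $P^3$ with $p_k(P^3)=v_k$ for all $k$.
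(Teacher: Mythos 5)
Your reduction to the dual statement is fine: polarity turns the problem into prescribing vertex degrees of a simplicial triangulation of $S^2$, the identity $\sum_{k}(6-k)v_k=12$ is the correct dual form of (*), and realization is not an issue since any simplicial triangulation of the sphere with at least four vertices is $3$-connected and Steinitz's theorem applies (so the $3$-connectivity worry you flag is actually the easy part). But the heart of Eberhard's theorem is precisely the step you dispatch in one sentence: ``the defects sum to $12$, therefore the patches can be positioned on $S^2$ and joined through a single filler region triangulated entirely by degree-$6$ vertices.'' The defect identity is a necessary condition, not a sufficient one that yields an assembly by inspection. The filler's interior vertices must have degree exactly $6$ (anything else changes the prescribed $v_k$ for $k\ne 6$), the boundaries of your disclination patches carry nonzero curvature and so do not embed flatly in the triangular lattice, and whether a hexagonal collar can absorb a given collection of such boundaries is a genuine combinatorial problem. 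This is exactly where all the work in the classical proofs lies (Eberhard's original argument, and the long inductive construction with a catalogue of gadgets in Gr\"unbaum's \emph{Convex Polytopes}, Chapter 13); your plan names such a catalogue and a ``canonical gluing scheme'' but does not supply either, so the proposal stops short of a proof.

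A concrete sign that the assembly cannot be generic: the admissible values of $p_6$ are constrained (for instance $p_3=4$ with all other $p_k=0$ is realizable for $p_6=0$ and all $p_6\geqslant 2$, but not for $p_6=1$), so any scheme like ``line the patches along an equatorial hexagonal strip and cap with hexagons'' must be checked to produce \emph{some} admissible $p_6$ for \emph{every} input sequence, including sequences where a single $p_k$ is huge or where only $k\geqslant 7$ occur; this verification is the theorem. Note also that the paper does not prove this statement -- it cites Eberhard -- so the comparison point is the classical construction, of which your text is an accurate high-level summary but not a replacement.
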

\section{Graph-truncations}
\begin{constr}
Consider a subgraph $\Gamma$ without isolated vertices in the edge-vertex
graph $G(P)$ of a simple $3$-polytope $P$. For each edge $E_{i,j}=F_i\cap
F_j=P\cap \{\ib{x}\in\mathbb R^3\colon
(\ib{a}_i+\ib{a}_j)\ib{x}+(b_i+b_j)=0\}$ consider the halfspace
$\mathcal{H}_{ij,\varepsilon}^+=\{\ib{x}\in\mathbb R^3\colon
(\ib{a}_i+\ib{a}_j)\ib{x}+(b_i+b_j)\geqslant \varepsilon\}$. Set
$$
P_{\Gamma,\varepsilon}=P\cap\bigcap\limits_{E_{i,j}\in \Gamma} \mathcal{H}^+_{ij,\varepsilon}
$$
For small values of $\varepsilon$  the combinatorial type of $\pg$ does not
depend on $\varepsilon$. We will denote it $P_{\Gamma}$ and call a
\emph{graph-truncation} of $P$.

Facets of the polytope $P_{\Gamma}$ are in one-to one correspondence with
facets $F_i$ of $P$ (denote such facets by the same symbol $F_i$) and edges
$F_i\cap F_j\in\Gamma$ (denote such facets as $F_{i,j}$).
\end{constr}
\begin{prop}\label{Simple}
The polytope $P_{\Gamma}$ is simple if and only if the graph $\Gamma$ does
not contain vertices of valency $2$.
\end{prop}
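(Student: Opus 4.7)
My plan is to analyze $P_\Gamma$ locally at each vertex of $P$. Every vertex of $P_\Gamma$ is either an old vertex $v$ of $P$ not incident to any edge in $\Gamma$, in which case $v$ is untouched by the cuts and hence simple since $P$ is, or a new vertex appearing in a neighborhood of a vertex $v$ of $P$ that is incident to at least one edge of $\Gamma$. Away from the vertices of $P$, any boundary point of $P_\Gamma$ (for small enough $\varepsilon$) lies on at most the three facets $F_i$, $F_j$, $F_{i,j}$ associated with a single new facet, so no non-simple vertex of $P_\Gamma$ can arise off the old vertex set.

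Fix a vertex $v = F_i \cap F_j \cap F_k$ of $P$ and pick affine local coordinates so that $v$ is the origin, $P$ is locally $\{x_1, x_2, x_3 \geq 0\}$, and the facets $F_i, F_j, F_k$ correspond to $\{x_1 = 0\}$, $\{x_2 = 0\}$, $\{x_3 = 0\}$ respectively; in such coordinates the cutting halfspace for the edge $E_{i,j}$ takes the form $\{x_1 + x_2 \geq \varepsilon'\}$ for some $\varepsilon' > 0$ proportional to $\varepsilon$, and similarly for the other two edges at $v$ (I will continue to write $\varepsilon$ for this rescaled constant). The verification then splits into four cases according to the valency $d_\Gamma(v) \in \{0, 1, 2, 3\}$.

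Cases $d_\Gamma(v) = 0, 1, 3$ are straightforward direct computations with the local model: no cut leaves $v$ as a simple vertex of $P_\Gamma$; one cut, say of $E_{i,j}$, replaces $v$ by the two new vertices $(0, \varepsilon, 0)$ and $(\varepsilon, 0, 0)$, each incident to exactly three facets; three cuts replace $v$ by four new vertices (the three points obtained from $(0, \varepsilon, \varepsilon)$ by permuting coordinates together with $(\varepsilon/2, \varepsilon/2, \varepsilon/2)$), each again incident to exactly three facets. The decisive and only delicate case is $d_\Gamma(v) = 2$: if $E_{i,j}, E_{j,k} \in \Gamma$ but $E_{i,k} \notin \Gamma$, then the two cutting hyperplanes $\{x_1 + x_2 = \varepsilon\}$ and $\{x_2 + x_3 = \varepsilon\}$ together with $\{x_1 = 0\}$ and $\{x_3 = 0\}$ all pass through the common point $(0, \varepsilon, 0)$, so this single vertex of $P_\Gamma$ lies on the four facets $F_i, F_k, F_{i,j}, F_{j,k}$ and therefore fails to be simple. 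Combining the four cases yields the claimed equivalence.
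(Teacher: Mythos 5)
Your proof is correct and follows essentially the same route as the paper: a local analysis at each vertex of $P$ in adapted coordinates where $P$ is $\{y_1,y_2,y_3\geqslant 0\}$ and the cuts are $y_p+y_q\geqslant\varepsilon$, with a case split over the valency $0,1,2,3$ of the vertex in $\Gamma$ and the same witness point $(0,\varepsilon,0)$ lying on four facets in the valency-$2$ case. The only cosmetic difference is that you enumerate the new simple vertices in the valency-$1$ and valency-$3$ cases, whereas the paper rules out a point on four facets by contradiction.
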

\begin{proof}
For small $\varepsilon$ all new vertices of the polytope $\pg$ lie in small
neighborhoods of vertices of $P$. Consider a vertex $\ib{v}=F_{i_1}\cap
F_{i_2}\cap F_{i_3}$ of $P$ and introduce new coordinates in $\mathbb R^3$ by
the formulas
$$
y_1=\ib{a}_{i_1}\ib{x}+b_{i_1},\quad y_2=\ib{a}_{i_2}\ib{x}+b_{i_2},\quad y_3=\ib{a}_{i_3}\ib{x}+b_{i_3}.
$$
In new coordinates in some neighborhood $U(\ib{v})$ of $\ib{v}=\mathbf{0}$
the polytope $\pg$ has irredundant representation
$$
\pg\cap U(\ib{v})=\{\ib{y}\in\mathbb R^3\colon y_1,y_2,y_3\geqslant 0;y_p+y_q\geqslant
\varepsilon,\text{ if }F_{i_p}\cap F_{i_q}\in\Gamma\}
$$

The polytope $\pg$ has non-simple vertex in  $U(\ib{v})$ if and only if there
is some point in $\pg\cap U(\ib{v})$ that belongs to $4$ facets.

If $\ib{v}\notin\Gamma$ then $\ib{v}\in \pg$ is the only vertex in
$U(\ib{v})$ and it is simple.

If $\ib{v}$ has valency $1$ in $\Gamma$, say $F_{i_1}\cap F_{i_2}\in \Gamma$,
and some point lies in  $F_{i_1},F_{i_2},F_{i_3},F_{i_1,i_2}$, then
$y_1=y_2=y_2=0$ and $y_1+y_2=\varepsilon$. Contradiction.

If $\ib{v}$ has valency $2$ in $\Gamma$, say $F_{i_1}\cap F_{i_2},F_{i_2}\cap
F_{i_3}\in \Gamma$, then the point $(0,\varepsilon,0)\in\pg$ belongs to
$F_{i_1},F_{i_3},F_{i_1,i_2},F_{i_2,i_3}$, so $\pg$ is not simple.

Let $\ib{v}$ has valency $3$ in $\Gamma$ and some point belongs to $4$
facets. If there are $F_{i_1,i_2}$, $F_{i_2,i_3}$ and $F_{i_3,i_1}$, among
them, then $y_1=y_2=y_3=\frac{\varepsilon}{2}$, and there can not be neither
$F_{i_1}$, nor $F_{i_2}$, nor $F_{i_3}$. Therefore there should be at least
two of $F_{i_1},F_{i_2}, F_{i_3}$, say $F_{i_1}, F_{i_2}$. Then $y_1=y_2=0$.
But $y_1+y_2\geqslant \varepsilon$. Contradiction. So $\pg$ has only simple
vertices in $U(\ib{v})$ in this case.
\end{proof}
\begin{theorem}\label{Gflag}
A simple $3$-polytope $P_{\Gamma}$ is flag if and only if any triangular
facet of $P$ contains no more than one edge in $\Gamma$ and for any $3$-belt
$(F_i,F_j,F_k)$ of $P$ one of the edges $F_i\cap F_j$, $F_j\cap F_k$,
$F_k\cap F_i$ belongs to $\Gamma$.
\end{theorem}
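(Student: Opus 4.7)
The strategy is to invoke Proposition~\ref{3-belt} (under the implicit assumption that $P\ne\Delta^3$) to reduce the claim ``$P_\Gamma$ is flag'' to ``$P_\Gamma$ has no 3-belt'', and then to enumerate the possible 3-belts of $P_\Gamma$ by the number $t\in\{0,1,2,3\}$ of new facets $F_{k,l}$ (coming from edges $F_k\cap F_l\in\Gamma$) they contain, as opposed to old facets $F_i$ of $P$. A preliminary step, carried out in the same local coordinates around a vertex of $P$ as in Proposition~\ref{Simple}, is to record the pairwise adjacencies in $P_\Gamma$: two old facets $F_i, F_j$ are adjacent iff they meet in $P$ and $F_i\cap F_j\notin\Gamma$; an old facet $F_i$ and a new facet $F_{k,l}$ are adjacent iff $i\in\{k,l\}$ or $F_i$ is the third facet at an endpoint of $F_k\cap F_l$ of valency $1$ in $\Gamma$; and two new facets $F_{i,j}, F_{k,l}$ are adjacent iff the edges $F_i\cap F_j$ and $F_k\cap F_l$ share an endpoint, which must then have valency $3$ in $\Gamma$ by Proposition~\ref{Simple}.

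The cases $t=0$ and $t=3$ are relatively direct. For $t=0$, a triple of old facets is a 3-belt of $P_\Gamma$ exactly when it is a 3-belt of $P$ and none of its three edges lies in $\Gamma$; this matches the failure of (ii). For $t=3$ with belt $\{F_{e_1}, F_{e_2}, F_{e_3}\}$, pairwise adjacency forces the three edges $e_i$ to pairwise share a vertex of valency $3$; there are only two possibilities, namely all three edges meeting at one common vertex (in which case the local valency-$3$ analysis produces a central vertex of $P_\Gamma$ lying in the triple intersection, so no 3-belt arises) or the three edges forming a $3$-cycle in the edge-vertex graph $G(P)$ with three distinct pairwise shared vertices (in which case a simplicity-based topological argument shows the $3$-cycle necessarily bounds a triangular facet of $P$ whose three edges are all in $\Gamma$, violating (i)). For $t=1$ with belt $\{F_n, F_{e_1}, F_{e_2}\}$, the adjacency of the two new facets forces $e_1, e_2$ to share a vertex $\mathbf{u}$ of valency $3$, and the adjacency of $F_n$ with both of them forces $F_n$ to be the common third facet ``opposite'' to $\mathbf{u}$; a direct check in coordinates around $\mathbf{u}$ shows the triple is empty precisely when $F_k$ (the common facet of $e_1$ and $e_2$) is a triangle with $F_n$ as its third neighbour, so that $F_k$ carries at least two edges of $\Gamma$ --- a failure of (i).

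The main technical case is $t=2$, with belt $\{F_i, F_j, F_{k,l}\}$. Sub-cases with $\{i,j\}\cap\{k,l\}\ne\varnothing$ are eliminated since a vertex near the relevant endpoint of $F_k\cap F_l$ survives in the triple intersection. In the remaining sub-case, $F_i, F_j$ must be third facets at two distinct valency-$1$ endpoints of $F_k\cap F_l$ with $F_i\cap F_j\ne\varnothing$ in $P$ and $F_i\cap F_j\notin\Gamma$. Analysing where the endpoints of the edge $F_i\cap F_j$ sit in $P$ then shows that at least one of the triples $\{F_i,F_j,F_k\}$ or $\{F_i,F_j,F_l\}$ is itself a 3-belt of $P$ whose three edges all avoid $\Gamma$ (so (ii) fails) --- unless both of these triples already meet at vertices of $P$, in which case the four facets $\{F_k,F_l,F_i,F_j\}$ together with their four triple-intersection vertices span a $K_4$-subgraph of $G(P)$ in which every vertex already uses all of its three incident edges, so the $K_4$ exhausts $P$ and $P=\Delta^3$. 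Verifying this last ``degenerate configuration $\Rightarrow P=\Delta^3$'' step is, I expect, the main technical obstacle; once it is established, the four cases assemble into both directions of the equivalence.
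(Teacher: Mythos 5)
Your overall strategy coincides with the paper's: both reduce flagness of $P_\Gamma$ to the absence of $3$-belts via Proposition \ref{3-belt}, and then classify the possible $3$-belts of $P_\Gamma$ according to how many of their members are new facets $F_{k,l}$ as opposed to old facets $F_i$, using the local coordinate picture of Proposition \ref{Simple} to read off adjacencies. (A minor slip: your labels $t=1$ and $t=2$ are interchanged relative to your own definition of $t$, but the content is clear.)

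The genuine gap is exactly the step you defer. For a belt $\{F_i,F_j,F_{k,l}\}$ with $\{i,j\}\cap\{k,l\}=\varnothing$ you claim that once ``degenerate configuration $\Rightarrow P=\Delta^3$'' is verified, ``the four cases assemble into both directions of the equivalence.'' This is not so: establishing $P=\Delta^3$ produces no contradiction with hypotheses (i) and (ii), because the degenerate configuration is realizable. Take $P=\Delta^3$ and $\Gamma$ consisting of the single edge $F_k\cap F_l$. Then every triangular facet of $P$ contains at most one edge of $\Gamma$, $P$ has no $3$-belts, and $P_\Gamma$ is simple; yet $P_\Gamma$ is combinatorially a triangular prism, whose three quadrilateral facets $F_i$, $F_j$, $F_{k,l}$ form precisely the $3$-belt of your degenerate case, so $P_\Gamma$ is not flag although (i) and (ii) hold. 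Hence this configuration is not a technicality that dissolves upon verification; it is an exceptional case in which the stated equivalence fails, and it must be excluded (e.g.\ by assuming $P\ne\Delta^3$, or by forbidding $\Gamma$ to be a single edge of the simplex) or treated separately. It is worth knowing that the paper's own argument stalls at the very same point: after reducing to $P=\Delta^3$ with $\Gamma=\{F_p\cap F_q\}$ it merely observes that $F_p$ and $F_q$ remain triangles and that $P_\Gamma$ ``is not flag,'' without deriving any contradiction. So your sketch reproduces the paper's route faithfully, including its weakest step, but it cannot be completed as announced: the missing idea is the recognition that the $\Delta^3$ case is a genuine exception requiring an amendment of the statement or a separate analysis, not a verification.
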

\begin{proof}
Since $P_{\Gamma}$ is simple, Proposition \ref{Simple} implies that valency
of  each vertex of $\Gamma$ is $1$ or $3$.

If $P$ contains a $3$-belt $(F_i,F_j,F_k)$, such that $F_i\cap F_j$, $F_j\cap
F_k$, $F_k\cap F_i\notin \Gamma$, then $(F_i,F_j,F_k)$ is either a $3$-belt
in $\pg$. Consider a triangular face of $P$. If exactly two it's edges belong
to $\Gamma$, then Proposition \ref{Simple} implies that valency of their
common vertex is $3$ and other vertices have valency $1$ in $\Gamma$. If all
tree edges belong to $\Gamma$, then all their vertices have valency $3$ in
$\Gamma$. In both cases after truncation the face remains to be triangular,
so $P_{\Gamma,\varepsilon}$ in not flag. Thus we proved the only if part of
the theorem .

$\pg\ne\Delta^3$, since it contains more than $4$ facets. Therefore if it is
not flag, then there is a $3$-belt $(G_1,G_2,G_3)$ in $\pg$ by Proposition
\ref{3-belt}.

If $G_1=F_i$, $G_2=F_j$, $G_3=F_k$, then either $(F_i,F_j,F_k)$ is a $3$-belt
in $P$, or there is a vertex $\ib{v}=F_i\cap F_j\cap F_k\in P$. In the first
case one of the edges $F_i\cap F_j$, $F_j\cap F_k$, or $F_k\cap F_i$ belongs
to $\Gamma$ and is cut off when we pass to $\pg$, so the corresponding facets
do not intersect in $\pg$. In the second case the vertex $\ib{v}$ is cut off,
since $F_i\cap F_j\cap F_k=\varnothing$ in $\pg$. It is possible only if we
cut off one of the edges containing this vertex, so the corresponding two
facets do not intersect in $\pg$.

If $G_1=F_i$, $G_2=F_j$, and $G_3=F_{p,q}$ correspond to an edge
$E_{p,q}=F_p\cap F_q$ of $P$, then $F_i\cap F_j\ne\varnothing$ and $E_{p,q}$
intersects both $F_i$ and $F_j$. Since $F_i\cap F_j$ was not cut off, we have
$\{i,j\}\ne\{p,q\}$. If $i\in\{p,q\}$ or $j\in\{p,q\}$, then the edge
$F_p\cap F_q$ intersects the edge $F_i\cap F_j$ at the vertex, so the facets
$F_i$, $F_j$, and $F_{p,q}$ have common vertex in $\pg$. Now let $\{i,j\}\cap
\{p,q\}=\varnothing$. Then $(F_i,F_j,F_p)$ or $(F_i,F_j,F_q)$ is a  $3$-belt
in $P$. Otherwise $F_i\cap F_j\cap F_p\ne\varnothing$, $F_i\cap F_j\cap
F_q\ne\varnothing$, $F_p\cap F_q\cap F_i \ne\varnothing$, and $F_p\cap
F_q\cap F_j\ne\varnothing$, therefore $P=\Delta^3$, all it's facets are
triangles and in any triangle no more than one edge is cut off. Since
$F_i\cap F_j\notin\Gamma$ and $F_p\cap F_q\in\Gamma$, in facets $F_p$ and
$F_q$ the only edge $F_p\cap F_q$ is cut off and $\Gamma$ contains no other
edges. Then the facets $F_p$ and $F_q$ are triangles in $\pg$ either, and it
is not flag. By assumption one of the edges of the $3$-belt we obtain belongs
to $\Gamma$. Since the edge  $F_i\cap F_j$ was not cut off, one of the edges
$F_i\cap F_p$, $F_j\cap F_p$, $F_i\cap F_q$ and $F_j\cap F_q$ belongs to
$\Gamma$ and was cut off, say $F_i\cap F_p$. Then $F_i\cap
F_{p,q}=\varnothing$, which is a contadiction.

If only one of the facets $(G_1,G_2,G_3)$ corresponds to a facet of $P$, say
$G_1=F_i$, then two other facets correspond to edges of $P$ that both
intersect $F_i$ and have common vertex. If both edges belong to $F_i$, then
$F_i\cap G_2\cap G_3\ne\varnothing$. If exactly one of them belong to $F_i$,
say corresponding to $G_2$, then $F_i\cap G_3=\varnothing$. At last, if both
of them do not belong to $F_i$, then their common vertex $\ib{v}$ do not
belong to $F_i$, and these two edges and their common vertex define some
facet $F_j$ that has with $F_i$ two common vertices -- the remaining ends of
two edges, thus $F_i\cap F_j$ is an edge, connecting these vertices. Then
$F_j$ is a triangle containing two edges in $\Gamma$. Contradiction.

At last if all three facets of $3$-belt correspond to edges of $P$, then
these edges pairwise intersect. Two of them define some facet $F_i$. If the
third edge does not belong to $F_i$, then all three edges have common vertex
and in $\pg$ the corresponding facets have a common vertex either. If the
third edge belongs to $F_i$, then $F_i$ is a triangle with three edges in
$\Gamma$. Contradiction.

Thus we have considered all possible cases, and the theorem is proved.
\end{proof}
\section{Application}
As an application of Theorem \ref{Gflag} we prove an analog of Eberhard's
theorem for flag $3$-polytopes. Since any face of a flag polytope is flag
itself, we have $p_3(P^3)=0$ for any flag polytope.
\begin{theorem}
For every sequence $(p_k|3\leqslant k\ne 6)$ of nonnegative integers
satisfying $p_3=0$ and {\rm (*)}, there exist values of $p_6$ such that there
is a flag simple $3$-polytope $P^3$ with  $p_k=p_k(P^3)$ for all $k\geqslant
3$.
\end{theorem}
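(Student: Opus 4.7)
The plan is to produce the desired flag polytope in two stages: first use Eberhard's theorem (Theorem~\ref{ET}) to realize the prescribed sequence by some simple $3$-polytope $Q$, then apply the graph-truncation $Q \mapsto Q_{G(Q)}$ along \emph{every} edge of $Q$ to kill all $3$-belts at once. The key observation is that taking $\Gamma = G(Q)$ automatically makes the truncation both simple and flag, while changing only $p_6$.

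First I would apply Theorem~\ref{ET} to the given sequence to obtain a simple $3$-polytope $Q$ with $p_k(Q) = p_k$ for all $k \ne 6$ (so in particular $p_3(Q) = 0$) and some value $p_6(Q)$. Setting $\Gamma := G(Q)$, every vertex has valency $3$ in $\Gamma$ because $Q$ is simple, so $P := Q_\Gamma$ is simple by Proposition~\ref{Simple}. For flagness via Theorem~\ref{Gflag}, the condition on triangular facets is vacuous because $p_3(Q) = 0$, while the condition on $3$-belts is trivially satisfied because $\Gamma$ contains every edge of $Q$; hence $P$ is flag.

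It remains to verify $p_k(P) = p_k(Q)$ for every $k \ne 6$, the change being absorbed by $p_6$. I would reuse the local coordinates around a vertex $\ib{v} = F_{i_1} \cap F_{i_2} \cap F_{i_3}$ of $Q$ introduced in the proof of Proposition~\ref{Simple}: with all three edges at $\ib{v}$ cut, the vertex $\ib{v}$ is replaced by four new vertices, namely $(0,\varepsilon,\varepsilon)$, $(\varepsilon,0,\varepsilon)$, $(\varepsilon,\varepsilon,0)$ and $(\varepsilon/2,\varepsilon/2,\varepsilon/2)$. Reading off face memberships, each old facet $F_{i_s}$ has its corner at $\ib{v}$ replaced by the single new corner $F_{i_s} \cap F_{i_s,i_t} \cap F_{i_s,i_r}$, at which exactly two new edges $F_{i_s} \cap F_{i_s,i_t}$ and $F_{i_s} \cap F_{i_s,i_r}$ meet, so the degree of $F_{i_s}$ is unchanged globally. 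Each new facet $F_{i_s,i_t}$, on the other hand, picks up two extra edges near each endpoint of $E_{i_s,i_t}$ (namely $F_{i_s,i_t} \cap F_{i_s,i_r}$ and $F_{i_s,i_t} \cap F_{i_t,i_r}$ at the endpoint with third facet $F_{i_r}$), in addition to the two ``long'' edges $F_{i_s,i_t} \cap F_{i_s}$ and $F_{i_s,i_t} \cap F_{i_t}$, for a total degree of $2+2+2=6$; thus every $F_{i_s,i_t}$ is hexagonal. Consequently $p_k(P) = p_k(Q) = p_k$ for all $k \ne 6$, and $p_6(P) = p_6(Q) + f_1(Q)$ is the value furnished by the theorem.

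The main thing to be careful about is the local bookkeeping in the third step --- specifically, checking that the valency-$3$ condition at every vertex simultaneously forces both invariance of old face degrees and hexagonality of the new facets. Once this is done, the simplicity, flagness, and face-count conclusions all follow mechanically from the already established Proposition~\ref{Simple} and Theorem~\ref{Gflag}, with Eberhard's theorem supplying the starting polytope $Q$.
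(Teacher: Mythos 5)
Your proposal is correct and follows essentially the same route as the paper: realize the sequence by Eberhard's theorem, truncate along the full edge graph $\Gamma=G(Q)$, invoke Proposition~\ref{Simple} and Theorem~\ref{Gflag} for simplicity and flagness, and observe that old facets keep their gonality while each cut edge contributes a hexagon, so only $p_6$ changes. Your local-coordinate verification that the new facets are hexagons and the old degrees are unchanged is a correct elaboration of a step the paper merely asserts.
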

\begin{proof}
From Eberhard's Theorem \ref{ET} it follows that there exist values of $p_6$
such that there is a polytope $P^3$ with $p_k=p_k(P)$ for all $k\geqslant 3$.
Let us consider the graph $\Gamma=G(P^3)$. Since $p_3=0$, we obtain from
Theorem \ref{Gflag} that $P_{G(P)}$ is flag. On the other hand, facets of
$P_{G(P)}$ are in one-to-one correspondence with facets and edges of $P$.
Moreover, $k$-gonal facets of $P$ correspond to $k$-gonal facets of
$P_{G(P)}$ and edges of $P$ correspond to $6$-gonal facets of $P_{G(P)}$.
Therefore
$$
p_k(P_{G(P)})=\begin{cases}p_k(P),&k\ne 6;\\
p_6(P)+f_1(P),&k=6.
\end{cases}
$$
This proves the theorem.
\end{proof}
The author is grateful to professor V.M.~Buchstaber for encouraging
discussions and a permanent attention to his work.

\end{document}